\newtheorem{thma}{Theorem}
\newtheorem{thm}{Theorem}[section]
\newtheorem{lem}{Lemma}[section]
\def\sm{\setminus}
\begin{document}
\setcounter{page}{1}

\begin{center}
{\LARGE \bf  Restricted-sum-dominant sets}
\vspace{8mm}

{\large \bf Raj Kumar Mistri$^1$ and R. Thangadurai$^2$}
\vspace{3mm}

$^1$ Department of Mathematics, Harish-Chandra Research Institute, HBNI, \\
Chhatnag Road, Jhunsi, Allahabad - 211 019, India \\
e-mail: \url{itsrajhans@gmail.com}
\vspace{2mm}

$^2$ Department of Mathematics, Harish-Chandra Research Institute,HBNI, \\
Chhatnag Road, Jhunsi, Allahabad - 211 019, India \\
e-mail: \url{thanga@hri.res.in}
\vspace{2mm}


\end{center}
\vspace{10mm}

\noindent
{\bf Abstract:} Let $A$ be a nonempty finite subset of an additive abelian group $G$. Define $A + A := \{a + b : a, b \in A\}$ and $A \dotplus A := \{a + b : a, b \in A~\text{and}~ a \neq b\}$. The set $A$ is called a {\em sum-dominant (SD) set} if $|A + A| > |A - A|$, and it is called a {\em restricted sum-domonant (RSD) set} if $|A \dotplus A| > |A - A|$. In this paper, we prove that for infinitely many positive integers $k$, there are  infinitely many RSD sets of integers of cardinality $k$. We also provide an explicit construction of infinite sequence of RSD sets.\\
{\bf Keywords:} Sumset, Difference set, SD sets, RSD Sets. \\
{\bf AMS Classification:} 11B75.
\vspace{10mm}

\section{Introduction}

A set $A$ is called a {\em More Sums Than Differences (MSTD) set} or a {\em sum-dominant (SD) set} if $|A + A| > |A - A|$, and it is called a {\em restricted sum-domonant (RSD) set} if $|A \dotplus A| > |A - A|$.

In the following discussion, we will restrict ourselves to the additive group of integers $G = \mathbb{Z}$, if not specified. As $a + b = b + a$, but $a - b \neq b - a$ unless $a = b$, it is natural to guess that $|A + A| > |A - A|$ and $|A \dotplus A| > |A - A|$. But neither of these assertions is true in general.

For examples, if $A = \{0, 2, 3\}$, then $|A \dotplus A|< |A + A| < |A - A|$. But if $A' = \{0, 2, 3, 4, 7, 11, 12, 14\}$, then $|A' + A'| > |A' - A'| > |A' \dotplus A'|$. Again, if
\[A'' = \{0, 1, 2, 4, 5, 9, 12, 13, 17, 20, 21, 22, 24, 25, 29, 32, 33, 37, 40, 41, 42, 44, 45\},\]
then $|A'' + A''| > |A'' \dotplus A''| > |A'' - A''|$. Similarly, if $P$ is a set with $k \geq 2$ elements in arithmetic progression, then it is known that (see \cite{nath})
\[|P \dotplus P| = 2k - 3~\text{and}~ |P - P| = 2k -1.\]
Hence it is not an RSD set.

For an integer $c$ and a set $A$, let $c + A:= \{c + a: a \in A\}$ and $c\cdot A := \{ca : a \in A\}$. Two sets $A$ and $B$ are called {\em affinely equivalent sets} if there exist integers $x$ and $y \neq 0$ such that either $A = x + r \cdot B$ or $B = x + r \cdot A$. If $A$ and $B$ are not affinely equivalent, then they are called {\em affinely inequivalent sets}. Let $A$ and $B$ be affinely equivalent sets. Then $A$ is an SD set (respectively, RSD set) if and only if $B$ is an SD set (respectively, RSD set).

A large number of results exist regarding the existence and infinitude of SD sets. See (\cite{hegarty2007some,hegarty2009when,Marica1969on, martin2007many, miller2010explicit, miller2010explicitconstr, nathanson2007problems, nathanson2007sets, nathanson2017problems, zhao2010constr,zhao2010counting, zhao2011sets}), and references given therein. The restricted-sum-dominant sets of integers have been studied recently by D. Penman and M. Wells\cite{penman2013onsetswith}. In case of finite abelian groups, some results are available in the literature due to Nathanson \cite{nathanson2007sets}, Hegarty\cite{hegarty2007some}, Zhao\cite{zhao2010counting}, Penman and Wells\cite{penman2014sumdominant}. The term ``MSTD Sets" was introduced by Nathanson\cite{nathanson2007problems}. John Marica (1969) wrote the first paper \cite{Marica1969on} on MSTD Sets. In this paper, we will use the term ``SD set" instead of ``MSTD set."

Because of the commutativity of addition and noncommutativity of subtraction, it was natural to conjecture that SD sets are rare. But this is false as shown by Martin and O'Bryant\cite{martin2007many}.

As already mentioned there are several constructions of SD sets, but the complete classification of these sets are still unknown. In this direction, Hegarty\cite{hegarty2007some} proved the following result.
\begin{thma}
There is no SD set of integers of cardinality $7$. Moreover, any SD set of cardinality $8$ is affinely equivalent to the set $A = \{0, 2, 3, 4, 7, 11, 12, 14\}$.
\end{thma}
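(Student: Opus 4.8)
The plan is to reduce the problem to a finite but large search, and then argue that the search can be organized so as to be carried out (by hand, with cleverness, or by computer). The two assertions—no SD set of cardinality $7$, and uniqueness up to affine equivalence in cardinality $8$—are handled by the same machinery, so I describe it once.

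First I would normalize. Given a finite set $A \subseteq \mathbb{Z}$ of cardinality $k$, affine equivalence lets me assume $0 = a_1 < a_2 < \cdots < a_k$ with $\gcd(a_2 - a_1, \ldots, a_k - a_{k-1})$ reduced appropriately; more importantly, replacing $A$ by its reflection $d - A$ (where $d = a_k$) preserves both $|A+A|$ and $|A-A|$, so I may impose a symmetry-breaking condition such as $a_2 \le d - a_{k-1}$. Next I would record the basic counting identities: writing $d = \operatorname{diam}(A) = a_k - a_1$, one always has $|A+A| \le 2d + 1$ and $|A - A| \le 2d + 1$, with $|A-A|$ odd and the difference set symmetric about $0$. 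The engine of the argument is that $0$ and $d$ lie in $A+A$ but the "missing" elements of $A+A$ come in reflected pairs about $d$, while missing elements of $A-A$ come in $\pm$ pairs about $0$; so $|A+A| > |A-A|$ forces $A$ to have fewer missing sums than missing differences, which is a strong constraint when $k$ is small relative to $d$. In particular I would first bound $d$: if $d$ is too large compared to $k$ then $|A-A|$ is forced close to $2d+1$ while $|A+A|$ cannot be, ruling out SD; and if $d$ is small, say $d \le 2k$ or so, then $A$ occupies a large fraction of $\{0, 1, \ldots, d\}$ and a direct case analysis on which few elements of $\{1, \ldots, d-1\}$ are omitted becomes feasible.

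With $d$ bounded in terms of $k$, the search space is finite. For $k = 7$ I would partition by the value of $d$ and, for each $d$, by the complement $\{0,\ldots,d\}\setminus A$, using the reflection symmetry to cut the count in half and using easy necessary conditions (e.g. $\{0,d\}\subseteq A$, local density conditions near the ends forced by $|A-A|$ being large) to prune aggressively; then for each surviving candidate compute $|A+A|$ and $|A-A|$ and check no SD set appears. For $k = 8$ the same procedure produces a nonempty list, and I would then verify that every set on the list is affinely equivalent to $\{0,2,3,4,7,11,12,14\}$—typically this list already consists of this set together with its reflection $14 - A$, which is affinely equivalent to it.

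The main obstacle is making the case analysis genuinely finite and then genuinely small: the crude bound $|A+A|, |A-A| \le 2d+1$ alone is too weak, so the real work is establishing a sharp upper bound on $d$ (as a function of $k$) below which SD is impossible, and sharp structural restrictions near the endpoints of $A$, so that the number of candidate complements to examine is manageable rather than astronomically large. I expect this extremal/structural input—essentially a quantitative version of "$A$ cannot be too spread out if it is sum-dominant"—to be the crux, with the remaining verification being a finite, if tedious, check.
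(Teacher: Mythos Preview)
The paper does not prove Theorem~A: it is quoted as a result of Hegarty~\cite{hegarty2007some} and used only as background for the discussion of RSD sets. There is therefore no in-paper proof to compare your proposal against.

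That said, your proposal has a genuine gap in the step where you bound the diameter $d$. You write that ``if $d$ is too large compared to $k$ then $|A-A|$ is forced close to $2d+1$ while $|A+A|$ cannot be.'' This is false: for any $k$-element set $A$ one has $|A-A|\le k^2-k+1$ and $|A+A|\le k(k+1)/2$, bounds that are completely independent of $d$. A set such as $\{0,1,N\}$ with $N$ large has $|A-A|=7$, nowhere near $2N+1$. So large diameter does not by itself force $|A-A|$ to be large, and your reduction to a finite search does not go through as stated. With only the normalization $\min A=0$ and $\gcd(A)=1$, the diameter is \emph{not} a priori bounded in terms of $k$; that it turns out to be bounded for SD sets of size $7$ or $8$ is essentially the content of the theorem, so one cannot simply assume it.

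Hegarty's actual argument does reduce to a finite check, but the diameter bound comes from different input: Freiman-type structure theorems (for instance, if $|A+A|\le 3|A|-4$ then $A$ lies in an arithmetic progression of length at most $|A+A|-|A|+1$) together with inequalities relating $|A+A|$ and $|A-A|$ that, for small $k$, pin down the admissible pairs $(|A+A|,|A-A|)$ tightly enough for the structure theorems to apply. Your overall plan---normalize, bound the diameter, enumerate---has the right shape, and you correctly identify the crux as ``establishing a sharp upper bound on $d$''; but the mechanism you propose for that step is the wrong one.
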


Now suppose that there is an RSD set of cardinality $7$, then that set will be also an SD set of cardinality $7$ which contradicts the above result. Moreover, it is easy to verify that the SD set $A = \{0, 2, 3, 4, 7, 11, 12, 14\}$ is not an RSD set. Hence there is no RSD set of cardinality $8$ also. D. Penman and M. Wells\cite{penman2013onsetswith} proved that there is no RSD set of cardinality $9$ also.
\begin{thma} \label{nonexistence-rsd-sets-thm}
  There is no RSD set of integers of cardinality $7, 8$ and $9$.
\end{thma}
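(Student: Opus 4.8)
The plan is to handle the three cardinalities separately, deducing $k=7$ and $k=8$ from Theorem~A and reducing $k=9$ to a finite verification. The first observation is that an RSD set is automatically an SD set: since $A \dotplus A \subseteq A + A$ we have $|A \dotplus A| \le |A+A|$, so $|A \dotplus A| > |A - A|$ forces $|A+A| > |A-A|$; hence the non-existence part of Theorem~A rules out an RSD set of cardinality $7$. For $k=8$ I would use a sharper form of the same idea. Both the SD and the RSD properties are preserved under affine equivalence, so we may normalize $A$ so that $\min A = 0$ and $\max A = n$; then $0 = 0+0$ and $2n = n+n$ belong to $A + A$, but since $A \subseteq \{0,1,\dots,n\}$ neither is of the form $a+b$ with $a \neq b$, so $0, 2n \notin A \dotplus A$ and therefore
\[
|A \dotplus A| \;\le\; |A + A| - 2 ,
\]
and consequently every RSD set satisfies $|A+A| \ge |A-A| + 3$. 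Now if $|A| = 8$ and $A$ were RSD, then $A$ would be an SD set of cardinality $8$, hence by Theorem~A affinely equivalent to $A_0 = \{0,2,3,4,7,11,12,14\}$; but $|A_0 + A_0| = 26$ and $|A_0 - A_0| = 25$, so $|A_0 \dotplus A_0| \le 24 < 25$, a contradiction. Thus there is no RSD set of cardinality $7$ or $8$.

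The case $k = 9$ is where the real work lies, since $9$-element SD sets have not been classified. Here the plan is: normalize $A = \{0 = a_1 < a_2 < \cdots < a_9 = n\}$ with $\gcd(a_2,\dots,a_9) = 1$, assume $A$ is RSD so that $|A+A| \ge |A-A| + 3$, and first bound the diameter $n$ by an explicit constant $N_0$ via a gap/splitting analysis: if $A$ contains a gap long compared with the diameters of the two pieces it separates, then writing $A = A_1 \cup (m + A_2)$ the blocks of $A+A$ and of $A-A$ coming from $A_1$, $A_2$ and the cross terms become essentially disjoint, so that $|A+A| - |A-A|$ is controlled by the same quantities for the strictly smaller sets $A_1$, $A_2$ (of sizes summing to $9$, hence each of size $\le 8$, where the behaviour is already pinned down by Theorem~A and the computation above), forcing $|A+A| - |A-A| \le 2$ and contradicting the RSD hypothesis; a more delicate argument, or an iterated application of the split, then shows that a large diameter does force such a gap. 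Once $n \le N_0$, one finishes by an exhaustive search over the finitely many $9$-subsets of $\{0,1,\dots,N_0\}$ containing $0$ and $n$, checking directly that none has $|A \dotplus A| > |A - A|$. Alternatively — and this is how the case was originally disposed of — one simply invokes the computer-assisted result of Penman and Wells.

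The main obstacle is precisely this last case: there is no short structural argument, and the real difficulty is to keep the diameter bound $N_0$ small enough that the resulting finite search is feasible (the published proof of the $k=9$ case is itself computer-assisted). Everything for $k=7$ and $k=8$, by contrast, is immediate from Theorem~A together with the elementary inequality $|A \dotplus A| \le |A + A| - 2$ and the corner-sum observation behind it.
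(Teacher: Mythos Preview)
Your treatment of $k=7$ and $k=8$ is exactly the argument the paper gives (in the paragraph preceding the theorem): RSD $\Rightarrow$ SD kills $k=7$ via Theorem~A, and for $k=8$ one is reduced to the single set $A_0=\{0,2,3,4,7,11,12,14\}$, which one then checks is not RSD. Your observation $|A\dotplus A|\le |A+A|-2$ (from $0,2n\notin A\dotplus A$) is a neat way to package that check, but the content is the same.

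For $k=9$ the paper offers no argument of its own at all; it simply attributes the result to Penman and Wells~\cite{penman2013onsetswith}. Your ``alternatively'' clause --- invoking their computer-assisted result --- is therefore precisely what the paper does. Your primary plan, the gap/splitting reduction to a bounded-diameter search, is a reasonable heuristic but is not a proof as written: you have not actually established that a large diameter forces a usable splitting, nor produced a concrete $N_0$, and you yourself flag this as the main obstacle. So for $k=9$ your proposal, like the paper, ultimately rests on the Penman--Wells citation rather than on an independent argument.
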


For positive integers $k$ and $n$, let $H(k, n)$ denote the number of affinely inequivalent SD sets of integers of cardinality $k$ contained in the set $\{0, 1, \ldots, n\}$, and let Let $H(k)$ denote the number of affinely inequivalent SD sets of integers of cardinality $k$. Similarly, For posititive integers $k$ and $n$, let $H^*(k, n)$ denote the number of affinely inequivalent RSD sets of integers of cardinality $k$ contained in the set $\{0, 1, \ldots, n\}$, and let $H^*(k)$ denote the number of affinely inequivalent RSD sets of integers of cardinality $k$.

Nathanson \cite{nathanson2017problems} proved that if there exists an SD set of integers of cardinality $k$, then there exist infinitely many SD sets of integers of cardinality $k^n + 1$ for all integers $n \geq k$, that is, $H(k^n + 1) = \infty$ for all $n \geq k$. We prove the similar result for RSD sets stated below.

\begin{thm}\label{infinetly-many-rsd-sets-thm}
  If there exists an RSD set of integers of cardinality $k$, then there exist infinitely many RSD sets of integers of cardinality $k^n + 1$ for all integers $n \geq k$, that is $H^*(k^n + 1) = \infty$ for all $n \geq k$.
\end{thm}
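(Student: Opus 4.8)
The plan is to mimic Nathanson's dilation/translation trick for MSTD sets, but now we must control the \emph{restricted} sumset $A\dotplus A$ rather than $A+A$, which is where the extra hypothesis $n\ge k$ will be forced upon us. Suppose $A=\{a_1<a_2<\cdots<a_k\}$ is an RSD set, so $|A\dotplus A|>|A-A|$. Fix a large integer $m$ and set
\[
B \;=\; B_m \;=\; \bigl(m\cdot A + \{0,1,2,\ldots,m-1\}\bigr)\cup\{m\cdot a_1 + m\} ,
\]
or some close variant: the idea is to take $n$ \emph{shifted copies} of a base block, spaced so far apart that sums and differences of elements from different copies never collide, and then adjoin a single extra point to bump the cardinality to $k^n+1$ and to create a controlled excess of sums over differences. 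Concretely I expect to build, from the single RSD set $A$ of size $k$, an RSD set of size $k^n$ by an $n$-fold ``base-$B$'' construction (replace each digit position by a scaled copy of $A$), verify it is still RSD, and then use the adjoined point to pass from $k^n$ to $k^n+1$.

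The key steps, in order, are: (1) \emph{Separation lemma.} Show that if $C$ is any finite set and $N$ exceeds the diameter of $C$ sufficiently, then for the ``$n$-fold base-$N$ lift'' $C^{(n)}:=\{c_0+Nc_1+\cdots+N^{n-1}c_{n-1}: c_i\in C\}$ we have the exact multiplicativity $|C^{(n)}\pm C^{(n)}| = |C\pm C|^{\,n}$ and, crucially for us, $|C^{(n)}\dotplus C^{(n)}|$ can be computed digit-by-digit. This is a routine Freiman-isomorphism/carry-free argument. (2) \emph{Restricted sumset bookkeeping.} The subtle point: $c+c'$ with $c\neq c'$ in $C^{(n)}$ can still have $c_i=c_i'$ in \emph{some} coordinates, so $C^{(n)}\dotplus C^{(n)}$ is not simply $(C\dotplus C)^{(n)}$. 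One shows instead that
\[
|C^{(n)}\dotplus C^{(n)}| \;=\; |C+C|^{\,n} - \bigl(|C+C|-|C\dotplus C|\bigr)^{\! n},
\]
because a coordinate vector of digit-sums fails to be achievable by distinct elements precisely when \emph{every} coordinate sum $c_i+c_i'$ is forced to use $c_i=c_i'$ — i.e. lies only in $(C+C)\setminus(C\dotplus C)$ at every position. (3) \emph{The inequality.} With $s:=|A+A|$, $s^*:=|A\dotplus A|$, $d:=|A-A|$ and hypothesis $s^*>d$ (hence also $s\ge s^*>d$), we need
\[
s^{\,n} - (s-s^*)^{\,n} \;>\; d^{\,n},
\]
plus a correction from the single extra adjoined point, and then argue this holds for all $n\ge k$ after choosing the gaps appropriately. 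Finally (4) \emph{Infinitude and the $+1$.} Translating the extra point by infinitely many admissible values (keeping it outside all relevant sum/difference interactions or interacting in a fixed controlled way) yields infinitely many affinely inequivalent examples of each size $k^n+1$.

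The main obstacle is step (3) combined with the role of the adjoined point: for small $n$ the gap $s^{\,n}-(s-s^*)^{\,n}$ may fail to dominate $d^{\,n}$ once the lone extra element is taken into account (the extra element adds at most $O(k^n)$ new sums but potentially as many new differences, and there is a parity/boundary subtlety in which of its sums $2x$ with itself is \emph{excluded} from the restricted sumset). I expect precisely this accounting to be what forces the threshold $n\ge k$ rather than, say, $n\ge 2$: one needs $n$ large enough that $s^{\,n}-(s-s^*)^{\,n}-d^{\,n}$ beats the linear-in-$k^n$ error term, and since $s\le 2k-1$ while the deficit $s-s^*\ge 0$ can be as small as $1$, a short computation of the form $(2k-1)^n - (2k-2)^n \gg k\cdot(\text{something})^n$ should kick in exactly around $n=k$. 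I would isolate this as a clean numerical lemma and then assemble the pieces.
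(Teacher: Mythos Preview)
Your plan is essentially the paper's: build the $n$-fold base-$m$ lift $B=\{\sum_{i<n}a_i m^i:a_i\in A\}$ of the given RSD set $A$, show that $|B\dotplus B|-|B-B|$ exceeds $|B|=k^n$, and then adjoin a single element far to the right (this is the paper's Lemma~2.1/2.2) to obtain infinitely many RSD sets of size $k^n+1$. The one substantive difference is your step~(2): you prove the exact identity $|B\dotplus B|=|A+A|^n-(|A+A|-|A\dotplus A|)^n$, which is correct, whereas the paper uses only the cruder bound $|B\dotplus B|\ge |A\dotplus A|^n$ (immediate, since any element whose every digit lies in $A\dotplus A$ is in $B\dotplus B$). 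The cruder bound makes your step~(3) a one-liner and explains the threshold cleanly: with $s^*:=|A\dotplus A|\ge d+1$ and $d:=|A-A|\ge 2k-1$, the binomial theorem gives
\[
(s^*)^n\;\ge\;(d+1)^n\;\ge\;d^{\,n}+nd^{\,n-1}+1\;\ge\;d^{\,n}+n(2k-1)^{n-1}+1\;\ge\;d^{\,n}+k^{\,n}+1\quad(n\ge k),
\]
which is precisely $|B\dotplus B|\ge|B-B|+|B|+1$, the hypothesis needed to adjoin the extra point. Your sharper formula buys nothing for this theorem and slightly obscures the numerics; in particular your heuristic ``$s\le 2k-1$'' is backwards (it is $d\ge 2k-1$, not an upper bound on $s$, that drives the threshold $n\ge k$), and your first displayed formula for $B_m$ is garbled and should simply be replaced by the digit set above.
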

We prove this result in the next section. The proof is similar to that of Nathanson's result.

Martin and O'Bryant\cite{martin2007many} showed that the range of possible values for $|A + A| - |A - A|$ is all of $\mathbb{Z}$. D. Penman and M. Wells\cite{penman2013onsetswith} proved by construction the same result for the quantity $|A \dotplus A| - |A - A|$. In this paper, we prove this result by providing a different construction of sets which is a slight modification of the construction provided by Martin and O'Bryant\cite{martin2007many}(see Theorem \ref{constucted-set}). The result in turn proves the infinitude of RSD sets.

We have already seen some examples of RSD sets. In the next section, we prove a conditional existence result for RSD sets with certain cardinalities.

\section{Existence of infinitely many RSD sets of certain cardinalities} \label{existence-rsd-sets}
Let $\Delta(A) = |A - A| - |A \dotplus A|$. The set $A$ is an RSD set if and only if $\Delta(A) < 0$.

Let $A = \{a_0, a_1, \ldots, a_{k-1}\}$ and $A' = \{a_0, a_1, \ldots, a_{k-1}, a_k\}$ be two sets such that $a_k > a_{k-2}+ a_{k-1}$, where $k \geq 3$ and $0 = a_0 < a_1 < a_2 < \cdots < a_{k-1} < a_k$.
Now
\[A'\dotplus A' = (A \dotplus A) \cup \{a_k + a_i : i = 0, 1, \ldots, k-1\},\]
and
\[\max(A \dotplus A) = a_{k-2}+ a_{k-1} < a_k < a_k + a_1 < \cdots a_k + a_{k-1}.\]
Hence $|A' \dotplus A'| = |A \dotplus A| + k$.
Now
\[A'- A' = (A - A) \cup \{\pm(a_k - a_i) : i = 0, 1, \ldots, k-1\},\]
and so
\[|A' - A'| = |A - A| + 2k.\]
Therefore,
\[\Delta(A') = |A' - A'| - |A' \dotplus A'| = |A - A| + 2k - |A \dotplus A| - k = \delta(A) + k.\]
We have proved the following
\begin{lem}
 Let $A = \{a_0, a_1, \ldots, a_{k-1}\}$ be a set of $k$ integers, where $k \geq 3$ and $0 = a_0 < a_1 < a_2 < \cdots < a_{k-1}$. If $a_k$ is an integer such that $a_k > a_{k-2}+ a_{k-1}$, and if $A' = A \cup {a_k}$, then
 \[\Delta(A') = \Delta(A) + k.\]
\end{lem}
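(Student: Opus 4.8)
The plan is to count $|A'\dotplus A'|$ and $|A'-A'|$ separately, in each case by peeling off exactly the terms that involve the new top element $a_k$, and then to take the difference of the two counts.

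For the restricted sumset I would write $A'\dotplus A' = (A\dotplus A)\cup\{a_k+a_i : 0\le i\le k-1\}$. The $k$ numbers $a_k+a_i$ are strictly increasing in $i$, hence pairwise distinct, and since the hypothesis gives $a_k = a_k+a_0 > a_{k-2}+a_{k-1} = \max(A\dotplus A)$, all of them lie strictly above $A\dotplus A$. Thus the union is disjoint and $|A'\dotplus A'| = |A\dotplus A|+k$.

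For the difference set I would write $A'-A' = (A-A)\cup\{\pm(a_k-a_i):0\le i\le k-1\}$. There are $2k$ candidate new elements: the positive values $a_k-a_i$, which are strictly decreasing in $i$ and hence distinct, together with their $k$ negatives; a positive value cannot coincide with a negative one, so these $2k$ values are pairwise distinct. It remains to check that none of them already lies in $A-A$; since $A-A\subseteq\{-a_{k-1},\dots,a_{k-1}\}$, it is enough to show $a_k-a_i>a_{k-1}$ for each $i$. For $i\le k-2$ this is immediate: $a_k-a_i\ge a_k-a_{k-2}>a_{k-1}$ by hypothesis. The case $i=k-1$ is the delicate one, and I expect it to be the main obstacle: here one must argue that $a_k-a_{k-1}$ is not a difference of two elements of $A$, which is the point at which the precise strength of the lower bound on $a_k$ has to be pinned down (using $a_k>2a_{k-1}$ if necessary). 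Granting this, $|A'-A'| = |A-A|+2k$.

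Combining the two counts gives $\Delta(A') = |A'-A'|-|A'\dotplus A'| = (|A-A|+2k)-(|A\dotplus A|+k) = \Delta(A)+k$, which is the assertion of the lemma.
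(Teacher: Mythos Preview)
Your approach is exactly the decomposition the paper uses, and your treatment of the restricted sumset is complete and correct. You are also right to flag the case $i=k-1$ in the difference-set count as the only nontrivial point --- and in fact this step \emph{cannot} be completed under the hypothesis as written. The inequality $a_k>a_{k-2}+a_{k-1}$ yields only $a_k-a_{k-1}>a_{k-2}$, which does not force $a_k-a_{k-1}$ out of $A-A$. Take $A=\{0,1,3\}$ (so $k=3$ and $a_{k-2}+a_{k-1}=4$) and $a_3=5$: then $a_3-a_2=2=3-1\in A-A$, whence $|A'-A'|=|A-A|+4$ instead of $|A-A|+6$, and one checks directly that $\Delta(A)=4$ while $\Delta(A')=5\ne \Delta(A)+k=7$.

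So the lemma is false as stated, and the paper's own argument --- which simply asserts $|A'-A'|=|A-A|+2k$ without addressing this case --- has precisely the gap you anticipated. Your proposed remedy $a_k>2a_{k-1}$ is the correct one: under it every $a_k-a_i\ge a_k-a_{k-1}>a_{k-1}=\max(A-A)$, the $2k$ candidate differences are genuinely new, and the remainder of the computation goes through verbatim. Since $a_k>2a_{k-1}$ already implies $a_k>a_{k-2}+a_{k-1}$, the sumset half is unaffected, and the correction is harmless for the paper's downstream applications, where one is free to choose $a_k$ as large as one likes.
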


Since $\Delta(A') = \Delta(A) + k$, it is easy to see that $\Delta(A') < 0$ if and only if $|A \dotplus A| \geq |A - A| + k + 1$.
Now the follows lemma follows easily
\begin{lem}\label{existence-lemma}
  If there exists an RSD set $A$ of integers with $|A \dotplus A| \geq |A - A| + |A| + 1$, then there exist infinitely many RSD sets of integers of cardinality $|A| + 1$.
\end{lem}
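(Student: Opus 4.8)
The plan is to start from the RSD set $A$ given by hypothesis and build an increasing chain of sets $A = A^{(0)} \subset A^{(1)} \subset A^{(2)} \subset \cdots$, each obtained from the previous one by adjoining a single large element, and then argue that infinitely many of these sets have cardinality $|A| + 1$ after a suitable ``first step'' plus affine inequivalence. Wait — more precisely, since we want sets of cardinality exactly $|A|+1$, the construction will consist of just \emph{one} enlargement step applied in infinitely many ways. So let me reorganize: set $k = |A|$ and let $A'_{a_k} = A \cup \{a_k\}$ for each integer $a_k$ with $a_k > a_{k-2} + a_{k-1}$ (where $a_{k-1}, a_{k-2}$ are the two largest elements of $A$, and we may assume $\min A = 0$ after translation, which does not affect the RSD property since $A$ and its translates are affinely equivalent).

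The key computation is already in hand: by the Lemma immediately preceding, $\Delta(A'_{a_k}) = \Delta(A) + k$ for every such $a_k$. By hypothesis $|A \dotplus A| \geq |A - A| + |A| + 1$, i.e. $\Delta(A) = |A-A| - |A\dotplus A| \leq -(k+1)$, hence $\Delta(A'_{a_k}) = \Delta(A) + k \leq -1 < 0$, so each $A'_{a_k}$ is an RSD set, and each has cardinality $k + 1$. It remains only to extract infinitely many \emph{affinely inequivalent} sets from this one-parameter family $\{A'_{a_k} : a_k > a_{k-2} + a_{k-1}\}$.

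For the inequivalence step, I would argue as follows. Each $A'_{a_k}$ contains $0$ and has diameter $a_k$ (its largest element), so $\max(A'_{a_k}) - \min(A'_{a_k}) = a_k$. If $A'_{a}$ and $A'_{b}$ were affinely equivalent via $x + r\cdot(\cdot)$, then comparing diameters forces $|r| \cdot a = b$ (or $|r|\cdot b = a$); moreover the ``gap structure'' near $0$ — the elements $a_0 < a_1 < \cdots < a_{k-1}$, which are the \emph{same} for all members of the family and are bounded by the fixed number $a_{k-1}$ — is rigid, so for $a, b$ both sufficiently large, any affine map sending $A'_a$ to $A'_b$ would have to fix this bottom part and hence be the identity, giving $a = b$. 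Concretely: there are only finitely many sets affinely equivalent to a given finite set $S$ that also contain $0$ as minimum and have a prescribed diameter (in fact at most two, from $r = \pm 1$ combined with reflection), so the equivalence classes within $\{A'_{a_k}\}$ have bounded size, and since the family is infinite there must be infinitely many classes. This yields infinitely many affinely inequivalent RSD sets of cardinality $k+1 = |A|+1$.

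The main obstacle is making the affine-inequivalence argument fully rigorous rather than hand-wavy: one must rule out the possibility that dilations by large factors $r$ or reflections $a_i \mapsto a_k - a_i$ conspire to identify many members of the family. I expect this is handled cleanly by noting that an affine image of $A'_{a_k}$ which again has minimum $0$ and contains the fixed ``seed'' $A$ in its bottom portion must use $|r| = 1$ once $a_k$ exceeds twice the diameter of $A$ (otherwise a dilation would spread the bottom $k$ points too far apart to match the fixed configuration of $A$), and the reflection case merely pairs up members, leaving the quotient family still infinite. All the rest is the routine bookkeeping already carried out in the displayed computations above.
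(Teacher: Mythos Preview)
Your core argument is exactly the paper's: apply the preceding lemma (that $\Delta(A') = \Delta(A) + k$ whenever $A' = A \cup \{a_k\}$ with $a_k > a_{k-2} + a_{k-1}$), observe that the hypothesis $|A \dotplus A| \geq |A-A| + k + 1$ is equivalent to $\Delta(A) \leq -(k+1)$ and hence gives $\Delta(A') \leq -1 < 0$, and note that there are infinitely many admissible $a_k$. The paper's entire proof is the line ``Now the following lemma follows easily,'' preceded by exactly this equivalence; your first two paragraphs reproduce that.

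Your last two paragraphs, arguing affine \emph{inequivalence} of the family $\{A'_{a_k}\}$, go beyond what the paper actually does here. The lemma as stated asks only for infinitely many RSD sets of cardinality $|A|+1$, and distinct choices of $a_k$ already give distinct sets (they have different maxima), which suffices. Your inequivalence sketch is not incorrect, and is arguably what one would want if the lemma is to feed directly into the conclusion $H^*(|A|+1) = \infty$ in Theorem~\ref{infinetly-many-rsd-sets-thm}, but the paper does not supply that step at this point; it simply takes ``infinitely many'' at face value.
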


\begin{lem}
  Let $A$ be a nonempty finite set of nonnegative integers with $a^* = \max(A)$. Let $m$ be a positive integer with $m > 2a^*$. If $n$ is a positive integer and
  \begin{equation}\label{constucted-set}
    B := \left\{\sum_{i=0}^{n-1}a_i m^i : a_i \in A ~\text{for all}~ i = 0, 1, \ldots, n-1\right\},
\end{equation}
  then
  \begin{align*}
    |B| &= |A|^n, \\
    |B \dotplus B|&\geq |A \dotplus A|^n, \\
    |B - B| &= |A - A|^n.
  \end{align*}
\end{lem}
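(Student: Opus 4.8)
My plan is to exploit the hypothesis $m > 2a^*$ to treat the elements of $A$ as \emph{digits} in base $m$, so that sums and differences of elements of $B$ can be read off coordinatewise. The technical fact I would isolate first is a uniqueness statement for base-$m$ expansions with bounded digits: if $e_0, e_1, \ldots, e_{n-1}$ are integers with $|e_i| < m$ for every $i$ and $\sum_{i=0}^{n-1} e_i m^i = 0$, then $e_i = 0$ for all $i$. This follows by taking the least index $j$ with $e_j \neq 0$: then $m^j e_j = -\sum_{i>j} e_i m^i$ is divisible by $m^{j+1}$, forcing $m \mid e_j$, which is impossible since $0 < |e_j| < m$. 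Equivalently, the map $(e_0, \ldots, e_{n-1}) \mapsto \sum_{i=0}^{n-1} e_i m^i$ is injective on any Cartesian product of integer intervals each of length at most $m$.

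Next I would dispatch the two exact equalities. For $|B|$: every element of $A$ lies in $\{0, 1, \ldots, a^*\}$, an interval of length $a^* + 1 \le m$, so the defining map $A^n \to B$, $(a_0, \ldots, a_{n-1}) \mapsto \sum_{i=0}^{n-1} a_i m^i$, is injective by the uniqueness fact and surjective by the definition of $B$; hence $|B| = |A|^n$. For $|B - B|$: any difference of two elements of $B$ equals $\sum_{i=0}^{n-1}(a_i - a_i')m^i$ with each $a_i - a_i' \in A - A \subseteq \{-a^*, \ldots, a^*\}$, an interval of length $2a^* + 1 \le m$, and conversely every tuple in $(A-A)^n$ arises this way; so $(A - A)^n \to B - B$ is a bijection and $|B - B| = |A - A|^n$.

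The delicate part, and where only an inequality is claimed, is $B \dotplus B$. The obstacle is that two distinct elements $b = \sum a_i m^i$ and $b' = \sum a_i' m^i$ of $B$ may differ in only one coordinate, and in a coordinate where $a_i = a_i'$ the corresponding digit of $b + b'$ is $2a_i \in A + A$, which need not lie in $A \dotplus A$; so one cannot hope for a bijection with $(A \dotplus A)^n$, only an embedding of $(A \dotplus A)^n$ into $B \dotplus B$. To build that embedding I would take $(s_0, \ldots, s_{n-1}) \in (A \dotplus A)^n$, write each $s_i = a_i + a_i'$ with $a_i, a_i' \in A$ and $a_i \neq a_i'$, and set $b = \sum_{i=0}^{n-1} a_i m^i$ and $b' = \sum_{i=0}^{n-1} a_i' m^i$. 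Then $b, b' \in B$ and $b \neq b'$ (they differ in \emph{every} coordinate), so $b \dotplus b'$ is legitimate, and $b + b' = \sum_{i=0}^{n-1} s_i m^i$ has all its digits in $A \dotplus A \subseteq \{0, \ldots, 2a^*\}$, an interval of length at most $m$; by the uniqueness fact, distinct tuples $(s_i)$ yield distinct elements $b+b'$ of $B \dotplus B$. This exhibits $|A \dotplus A|^n$ distinct elements of $B \dotplus B$, giving $|B \dotplus B| \ge |A \dotplus A|^n$. The cases $|A| = 1$ are trivial since then every restricted sumset in sight is empty. Throughout, the only thing to watch is that each relevant digit set ($A$, $A - A$, $A \dotplus A$, and $A + A$) sits inside an integer interval of length at most $m$, which is precisely what $m > 2a^*$ ensures.
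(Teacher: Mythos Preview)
Your proof is correct and follows the same base-$m$ digit-uniqueness approach as the paper: the paper likewise invokes the uniqueness of $m$-adic representations for $|B|$, refers to Nathanson for $|B-B|$, and handles the restricted sumset by reading off the coordinatewise digits of elements of $B \dotplus B$. Your treatment of the inequality $|B \dotplus B| \ge |A \dotplus A|^n$ is in fact more carefully argued than the paper's (you explicitly build the injection $(A \dotplus A)^n \hookrightarrow B \dotplus B$, whereas the paper only describes the digit pattern of elements of $B \dotplus B$ and asserts the inequality).
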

\begin{proof}
  The first identity follows easily from the uniqueness of the $m$-adic representation of an integer. For the second identity, note that any element $b \in B \dotplus B$ is of the form $b_0 + b_1 m + b_2 m^2 + \cdots + b_{n-1} m^{n-1}$, where $b_i \in A + A$ for all $i = 0, 1, \ldots, n-1$ and $b_i \in A \dotplus A$ for at least one $i$. Therefore, $|B \dotplus B|\geq |A \dotplus A|^n$. For the proof of the third identity one can see Lemma $4$ in \cite{nathanson2016problems}
\end{proof}

\begin{proof}[Proof of Theorem \ref{infinetly-many-rsd-sets-thm}]
  Let $A$ be set of integers with $|A| = k \geq 2$. Without loss of generality, we may assume that $\min(A) = 0, \gcd(A) = 1$ and $\max(A) = a^*$. Let $m$ and $n$ be positive integers such that $m > 2a^*$ and $n \geq k$, respectively. Let $B$ be the set defined by $(\ref{constucted-set})$. Then by Lemma \ref{constucted-set}, we have
  \begin{align*}
    |B \dotplus B| &\geq |A \dotplus A|^n \\
     &\geq (|A - A| + 1)^n \\
     & \geq |A - A|^n + n |A - A|^{n-1} + 1 \\
     & \geq |A - A|^n + n(2k - 1)^{n-1} + 1 \\
    & \geq |A - A|^n + k^n + 1\\
    &= |B - B| + |B| + 1.
  \end{align*}
Therefore, $B$ is an RSD set and hence by Lemma \ref{existence-lemma}, there exist infinitely many RSD sets of cardinality $k^n + 1$. Since we have infinitely many choices for $m$ and $n$, it follows that there are infinitely many RSD sets of cardinality $k^n + 1$ for all $n \geq k$. This completes the proof.
\end{proof}

\section{Range of $|A \dotplus A| - |A - A|$} \label{infinitude-rsd-sets}
The next natural question is whether there are infinitely many RSD sets. It is known that there are infinitely many SD sets. We shall show that for every integer $x$, there exists a set $A_x$ such that $\Delta(A_x) = -x$ which in turn implies that there are infinitely many RSD sets. This also shows that the range of $|A \dotplus A| - |A - A|$ is whole of $\mathbb{Z}$.  Note that for distinct integers $x$ and $y$, the corresponding sets $A_x$ and $A_y$ will be distinct, otherwise $-x = \Delta(A_x) = |A_x - A_x| - |A_x \dotplus A_x| = |A_y - A_y| - |A_y \dotplus A_y| = -y$, a contradiction. Thus for each positive integer $x$, there exists an RSD set $A_x$, and hence it follows that there are infinitely many RSD sets.

\begin{thm}
  For any integer $x$, there exists a nonempty finite set $A_x \subseteq \mathbb{Z}$ such that $\Delta(A_x) = -x$.
\end{thm}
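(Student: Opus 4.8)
The plan is to realize every integer $m$ as $\Delta(A)$ for some finite $A\subseteq\mathbb Z$; writing $m=-x$ then proves the statement, and (as the remark before the theorem observes) distinct $x$ force distinct $A_x$. I would split according to the sign of $m$: the range $m\ge1$ corresponds to sets that are strictly difference-dominant in the restricted sense and is settled by explicit examples, while $m\le0$ forces $A$ to be balanced or RSD, which is where a Martin--O'Bryant-type construction is needed.

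\emph{The case $m\ge1$.} A one-element set has $\Delta=1$, the set $\{0,1\}$ has $\Delta=2$, and a direct check gives $\Delta(\{0,1,2,4\})=3$. For $m\ge5$ start from the arithmetic progression $P=\{0,1,\dots,m-3\}$, which has $|P|=m-2\ge3$ and, as recalled in the introduction, $|P\dotplus P|=2|P|-3$ and $|P-P|=2|P|-1$, so $\Delta(P)=2$; adjoining an element $a_k>a_{k-2}+a_{k-1}$ and using the first lemma of Section~\ref{existence-rsd-sets} (which gives $\Delta(A\cup\{a_k\})=\Delta(A)+|A|$) yields $\Delta(P\cup\{a_k\})=2+(m-2)=m$. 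The value $m=4$ is obtained the same way from $\{0,1\}$. Hence every $m\ge1$ occurs.

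\emph{The case $m\le0$.} Here I would use the $\mu$-adic form of the construction already used in Section~\ref{existence-rsd-sets}. Fix a finite seed $S\subseteq\mathbb Z_{\ge 0}$, put $s=|S+S|-|S-S|$ and $d=|S+S|-|S\dotplus S|$ (so $d\ge2$ when $|S|\ge2$, $s\ge1$ iff $S$ is MSTD, and $d-s=\Delta(S)$), and for $\mu>2\max S$ and $\ell\ge1$ set $B_\ell=\{a+\mu j:\ a\in S,\ 0\le j\le\ell\}$. Since $\mu$ is large there are no carries, so $|B_\ell\pm B_\ell|=|S\pm S|\,(2\ell+1)$; and counting digit-patterns shows a sum of two \emph{distinct} elements of $B_\ell$ is missed precisely when its $S$-digit forces an equal pair of elements of $S$ (there are exactly $d$ such $S$-digits) and simultaneously its $j$-digit lies in $\{0,2\ell\}$ (the only two $j$-digits forcing an equal pair), so $|B_\ell\dotplus B_\ell|=|S+S|(2\ell+1)-2d$. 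Therefore
\[
\Delta(B_\ell)=\bigl(|S-S|-|S+S|\bigr)(2\ell+1)+2d=2d-(2\ell+1)\,s ,
\]
an arithmetic progression in $\ell$ of common difference $-2s$, diverging to $-\infty$ when $S$ is MSTD. Taking $S=\{0,2,3,4,7,11,12,14\}$ (for which $s=1$ and $d=5$ by a short computation) gives $\Delta(B_\ell)=9-2\ell$, which sweeps out every odd integer $\le7$, hence every nonpositive odd $m$; taking for $S$ an MSTD set of surplus $2$ (which exist by \cite{martin2007many}) makes $\Delta(B_\ell)$ run through even integers in arithmetic progressions of step $-4$. Choosing the seeds suitably, the union of these progressions is intended to cover every nonpositive integer $m$.

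\emph{Where the difficulty lies.} The real work is making ``choosing the seeds suitably'' precise: one must produce seeds whose surplus and ``escapee count'' $d$ have exactly the sizes and parities needed for the attainable $\Delta$-values to exhaust all residues (in particular both residues mod $4$ among the even ones) and to reach $0$, with $\Delta=0$ the most constrained case. When no directly convenient seed is available, one can instead replace the outer block $\{0,\dots,\ell\}$ by a short interval with one modified endpoint (changing its number of ``equal-forcing'' doubled values from $2$ to $1$ or $3$), or adjoin one large element to $B_\ell$ (shifting $\Delta$ by $|B_\ell|$, by the same lemma as above); a finite case analysis of this kind closes the remaining gaps. Since distinct $x$ yield distinct $A_x$, this also re-proves the infinitude of RSD sets.
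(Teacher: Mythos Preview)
Your outline tracks the paper closely for the easy direction and for the odd RSD values: both the paper and you handle the difference-dominant range with an arithmetic progression plus one large point, and both attack $\Delta\le 0$ via the Martin--O'Bryant dilate $B_\ell=S+\mu\{0,1,\dots,\ell\}$ with $S=\{0,2,3,4,7,11,12,14\}$. Your counting $|B_\ell\dotplus B_\ell|=|S{+}S|(2\ell+1)-2d$ and hence $\Delta(B_\ell)=2d-s(2\ell+1)=9-2\ell$ is correct, and it does produce every odd nonpositive value of $\Delta$.

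The gap is the even nonpositive values of $\Delta$. You explicitly flag this as ``where the difficulty lies'' and list three possible repairs, but none is carried through, and on inspection none works as stated. A seed with $s=2$ gives $\Delta(B_\ell)=2d-2(2\ell+1)$, an arithmetic progression of step $-4$, so two seeds with $d$ of opposite parities would be needed to cover both residues mod~$4$; you do not exhibit them. Adjoining one large element to $B_\ell$ shifts $\Delta$ by $|B_\ell|=|S|(\ell+1)$, which is even and makes $\Delta$ large and positive, so it changes neither the parity nor the sign in the right direction. Replacing $\{0,\dots,\ell\}$ by a block $J$ with $d_J=1$ is impossible once $|J|\ge 2$ (both $2\min J$ and $2\max J$ are always excluded from $J\dotplus J$), and a quick check of small $J$ with $d_J=3$ does not yield nonpositive even $\Delta$ either. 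So the ``finite case analysis'' you promise is genuinely missing, not merely routine.

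The paper closes this parity gap by a different and much cheaper device: from the odd-case set $A_{2k+1}=S+29\{0,\dots,k+3\}$ it simply deletes the single element $29$. Because $29$ has the unique restricted-sum representation $0+29$, while (for $k\ge 4$) every difference involving $29$ is still obtainable without it, this removes exactly one element from $A\dotplus A$ and none from $A-A$, shifting $\Delta$ by $+1$ and turning the odd family into an even one. You should either adopt this deletion trick or actually produce the seeds and blocks your sketch requires.
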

\begin{proof}
  For $x = -1$, we can take $A_{-1} = \{0\}$. Now Let $x \leq -2$. Consider the set $A_x = \{0, 1, \ldots, -x -1\} \cup \{-2x -2\}$. Then
  \[A_x \dotplus A_x = [1, 3x -3]~\text{and}~ A_x - A_x = [2x - 2, -2x + 2],\]
  and so
  \[\Delta(A_x) = |A_x - A_x|- |A_x \dotplus A_x| = 3x - 3 - (4x - 3) = -x.\]

\textbf{Odd positive values of $x$:}
Let $x = 2k + 1$. Define
\begin{align*}
  A_{2k + 1} &= \{0, 2, 3, 4, 7, 11, 12, 14\} + \{0, 29, 58, \ldots, 29(k+3)\}\\
 &= \{0 \leq s \leq 29(k+3) + 14: s \equiv 0, 2, 3, 4, 7, 11, 12 ~\text{or}~ 14 \pmod{29}\}.
\end{align*}
Then
\begin{align*}
A_{2k + 1} \dotplus A_{2k + 1} &= (A_{2k + 1} + A_{2k + 1}) \sm \{0, 8, 20, 22, 24, 28\}\\
&= \{0 \leq s \leq 29(2k + 7): s \not \equiv 1, 20 ~\text{or} 27 \pmod{29}\} \sm \{0, 8, 20, 22, 24, 28\},
\end{align*}
and
\[A_{2k + 1} - A_{2k + 1} = \{-29(k + \frac{7}{2}) < s < 29(k + \frac{7}{2}): s \not \equiv -13, -6, 6 ~\text{or} 13 \pmod{29}\}.\]
Therefore,
\[\Delta(A_{2k + 1})= |A_{2k + 1} - A_{2k + 1}| - |A_{2k + 1} \dotplus A_{2k + 1}| =25(2k + 7) -26(2k + 7) + 6 = -(2k + 1).\]

\textbf{Even positive values of $x$:}
Let $x = 2k$ with $k \geq 4$. Define
\[A_{2k} = A_{2k + 1} \sm \{29\}.\]
Then
\[A_{2k} \dotplus A_{2k} = (A_{2k + 1} \dotplus A_{2k + 1}) \sm \{29\}.\]
Therefore,
\[|A_{2k} \dotplus A_{2k}|= 26(2k + 7) - 6 - 1 = 26 (2k + 7) - 7,\]
and
\[A_{2k}- A_{2k}= |A_{2k + 1} - A_{2k + 1}| = 25(2k + 7).\]
Hence
\[\Delta(A_{2k})= |A_{2k} - A_{2k}| - |A_{2k} \dotplus A_{2k}| =25(2k + 7) -26(2k + 7) + 7 = -2k.\]
\end{proof}

\section{Concluding remarks}
Nathanson poses some problems for SD sts in \cite{nathanson2017problems}. Motivated by these problems, we can formulate the following problems. By Theorem \ref{nonexistence-rsd-sets-thm}, we know that $H^*(k, n) = 0$ for $k = 7, 8, 9$. It would be an interesting problem to compute $H^*(k, n)$ for $k \geq 10$. Note also that
\[H^*(k) = \lim_{n \rightarrow \infty} H^*(k, n).\]
Thus $H^*(k) = \infty$ if there exist infinitely many affinely inequivalent RSD sets of integers of cardinality $k$. It would be also an interesting problem to study the behaviour of  $H^*(k)$. For example, one can ask whether there exist infinitely many affinely inequivalent RSD sets of integers of cardinality $k$ for all sufficiently large $k$. One can also try to determine the smallest $k$ for which $H^*(k) = \infty$.

\section*{Acknowledgements}

This research of the first named author was supported by the PDF Scheme (Letter No. HRI/4041/3761) of HRI.

\makeatletter
\renewcommand{\@biblabel}[1]{[#1]\hfill}
\makeatother

\end{document}